\newtheorem{theorem}{Theorem}
\newtheorem{lemma}[theorem]{Lemma}
\newtheorem{cor}[theorem]{Corollary}
\theoremstyle{definition}
\newtheorem*{defi}{Definition}
\newtheorem*{remark}{Remark}
\let\d\delta
\def\CC{\mathbb C}
\def\PP{\mathbb P}
\def\Ocal{\mathcal O}
\let\frak\mathfrak
\def\KK{{ K}}
\begin{document}
\title{non-archimedean analytic curves in the complements of
hypersurface divisors}
\author{Ta Thi Hoai An$^*$}
\address{Institute of Mathematics\\
18 Hoang Quoc Viet, Cau Giay\\
Hanoi, VietNam}
\email{tthan@math.ac.vn}
\author{Julie Tzu-Yueh Wang}
\address{Institute of Mathematics\\Academia
Sinica\\Nankang\\ Taipei 11529\\Taiwan, R.O.C.}
\email{jwang@math.sinica.edu.tw}
\author{Pit-Mann Wong}
\address{Department of Mathematics\\University of Notre Dame\\Notre
Dame\\
IN 46556\\U.S.A.}
\email{wong.2@nd.edu}
\thanks{*Junior Associate Member of ICTP, Trieste, Italy}
 \subjclass[2000]{14G22,
11J97, 30D35}
\begin{abstract}
 We study the degeneration dimension of   non-archimedean analytic maps
into the complement of hypersurface divisors of   smooth projective
varieties.   We also show that there exist no non-archimedean analytic
maps into
$\PP^n\setminus\cup_{i=1}^n D_i$ where $D_i$, $1\le i\le n$, are
hypersurfaces of degree at least 2 in general position and intersecting
transversally.  Moreover, we prove that  there exist no
non-archimedean analytic maps into  $\PP^2\setminus\cup_{i=1}^2 D_i$
when $D_1$, $D_2$ are generic plane curves with $\deg D_1+\deg D_2\ge 4$.
\end{abstract}
\baselineskip=16truept \maketitle \pagestyle{myheadings}

\markboth{T. An, J. Wang and P.-M. Wong}{Non-archimedean Hyperbolicity}

\section{Introduction}
A complex manifold $X$ is said to be hyperbolic (in the sense of Brody)
if every
analytic map from the complex plane $\CC$ to $X$ is constant. One of
the
main
related questions is when the complements of hypersurfaces in $\PP^n$
are
hyperbolic.
It was conjectured by Kobayashi \cite{Ko} and Zaidenberg \cite{Z} that
the complements of ``generic" hypersurfaces in $\PP^n$ with degree at
least $2n+1$
are hyperbolic.
There have been many results related to this conjecture.  We will
only mention a few that are related to our results in the non-archimedean
case.  Green \cite{G} verified the conjecture
 in the case of
$2n+1$ hyperplanes in
general position. More generally, Babets \cite{Ba}, Eremenko-Sodin
\cite{ES}, and Ru \cite{Ru2} independently  showed that
$\PP^n\setminus\{\,2n+1\text{ hypersurfaces in general position}\}$ is
hyperbolic.
When $n=2$, the conjecture is correct for the case of four
generic curves (cf. \cite{DSW1}).
For the case of three  generic  curves $C_1,\,C_2,\,C_3$,
Dethloff,
Schmacher, and Wong (\cite{DSW1}, \cite{DSW2}) showed for that
$\PP^2\setminus
\cup_{i=1}^3C_i $ is hyperbolic if $\deg C_i\ge 2$ for $i=1,2,3$. When
one of the
$C_i$ is a line they show that any holomorphic map
$f:\CC\to\PP^2\setminus
\cup_{i=1}^3C_i $ is algebraically degenerate if $d_1=1$, $d_2\ge 3$ and
$d_3\ge 4$, up to enumeration.  More generally, one can estimate the
dimension of the image of an analytic map from $\CC$ into a complex
manifold.  For example, it is well-known that
an analytic map $f:\CC\to \PP^n\setminus\{\,n+2\text{ hypersurfaces in
general position}\}$ is algebraically degenerate.  Recently, more
studies in this direction for varieties off ample divisors were
studied by Noguchi and Winkelmann in \cite{NW}.

Similar questions can be asked when the ground field is
non-archimedean.
Let $\KK$ be an algebraically closed fields of arbitrary
characteristic,
complete
with respect to a non-archimedean absolute value $| \, \, |$.
A variety $X$ over $\KK$ is said to be $\KK$-hyperbolic if every
analytic
map from $\KK$ to $X$ is constant. In contrast to the situation over
the
complex
numbers, it is much easier to study hyperbolic problems over
non-archimedean ground
fields. For example,
as an easy consequence of the second main theorem of Ru \cite{Ru},
$\PP^n\setminus\{\, n+1\text{ hypersurfaces in general position}\}$ is
$\KK$-hyperbolic. Similarly, the second main theorem of An
\cite{A} shows that   $X\setminus\{\, n+1\text{
hypersurface divisors in general position}\}$ is
$\KK$-hyperbolic where $X$ is a projective variety over $\KK$ in $\PP^N$
and a hypersurface divisor is the intersection of a hypersurface  in
$\PP^N$ with $X$. It is
also easy to see that an analytic map
$f:\KK\to \PP^n\setminus D_1\cup D_2$ is algebraically degenerate if $D_1$
and $D_2$ are
distinct hypersurfaces in $\PP^n$.

In this note, we will first study the algebraic degeneracy of
        non-archimedean analytic maps omitting hypersurface divisors in
smooth  projective
        varieties.  The results are stated in section 2. Second, we
will
        study $K$-hyperbolicity for complements of hypersurfaces in
    projective space.  Similar to the conjecture of Kobayashi and
    Zaidenberg, we conjecture the following:

\medskip
\noindent{\bf Conjecture.}
Let $D_1$,...,$D_q$, $q\le n$, be $q$ distinct generic hypersurfaces in
$\PP^n(\KK)$. If
$\sum_{i=1}^q\deg D_i\ge 2n$, then
$\PP^n\setminus \cup_{i=1}^q D_i $ is $\KK$-hyperbolic.

\medskip
In Section 3, we will prove this conjecture for the case of $n$ generic
hypersurfaces and $\deg D_i\ge 2$ for each $1\le i\le n$.   We will also
prove this conjecture for $\PP^2$ omitting 2 generic curves.

 {\it Acknowledgments.} A part of this article was written while the first and the third name
authors were visiting the Institute of Mathematics, Academia Sinica,
Taiwan.  They would like to thank the Institute for warm hospitality.

\section{Results in General cases}

\medskip
Let $X$ be a $n$-dimensional projective
variety. A collection of $q $ effective divisors
$D_1,\dots,D_q$ of $X$ is said to be
\textit{in general position } if
\begin{enumerate}
\item[(i)] $q\le n$, and the codimension of each component of
$\cap_{j=1}^q D_{q}$ in
$X$ is $q$;
\item[(ii)] $q\ge n+1$, and for any subset
$\{i_0,\dots,i_n\}$ of $\{1,\dots,q\}$ of cardinality $n+1,$
$\cap_{j=0}^n D_{i_j} =\emptyset.$
\end{enumerate}

\begin{theorem}\label{degenerate} Let $X$ be an $n$-dimensional
nonsingular projective
subvariety of
$ \PP^N(\KK)$ and $P_1, ..., P_q$, $ q\ge 2$,
be non-constant homogeneous polynomials in $N+1$ variables.
Let $D_i=X\cap\{P_i=0\}$, $1\le i\le q$, be the divisors of $X$
in general position.  Let $f$ be an analytic map from $\KK$ to
$X\setminus \cup_{i=1}^qD_i$.
Then the image of $f$ is contained in a subvariety
of $X$ of codimension $\min\{n+1, q\}-1$ in $X$.
In particularly, $f$ is algebraically degenerate if $q\ge2$, and
$X\setminus \cup_{i=1}^qD_i$ is $\KK$-hyperbolic if $q\ge n+1$.
\end{theorem}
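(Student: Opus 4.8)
The plan is to exploit the non-archimedean analogue of Picard's theorem: an entire function $g\colon\KK\to\KK$ without zeros is constant. First I would produce a \emph{global} reduced representation of $f$. Since every line bundle on the affine line $\KK=\AA^1$ is trivial, $f^*\Ocal_{\PP^N}(1)\cong\Ocal_\KK$, so the homogeneous coordinates pull back to entire functions $f_0,\dots,f_N$ with no common zero, and $\tilde f=(f_0,\dots,f_N)$ represents $f$. For each $i$ the function $P_i(\tilde f)$ is entire, and because $f$ avoids $D_i=X\cap\{P_i=0\}$ it is nowhere zero; hence $P_i(\tilde f)\equiv c_i$ for some $c_i\in\KK^{*}$. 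Writing $d_i=\deg P_i\ge1$ and $\mu_i=c_1^{d_i}/c_i^{d_1}\in\KK^{*}$, the homogeneous polynomial $Q_i:=P_1^{d_i}-\mu_iP_i^{d_1}$ of degree $d_1d_i$ vanishes on $\tilde f(\KK)$; hence the irreducible subvariety $Z:=\overline{f(\KK)}$ satisfies $Z\subseteq\{Q_i=0\}$ for $i=2,\dots,q$. If $f$ is constant, $Z$ is a point and there is nothing to prove, so assume $\dim Z\ge1$.

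Next I would run a dichotomy according to whether $Z$ is contained in some $D_i$. If $Z\subseteq D_i$ for some $i$, then $P_i|_Z\equiv0$; evaluating $Q_i|_Z=0$ forces $P_1|_Z\equiv0$, i.e.\ $Z\subseteq D_1$, and then $Q_j|_Z=0$ forces $P_j|_Z\equiv0$ for every $j$, so $Z\subseteq\bigcap_{j=1}^qD_j$. By the general-position hypothesis this intersection is empty when $q\ge n+1$ (contradicting $\dim Z\ge1$) and is pure of codimension $q$ when $q\le n$; either way $\operatorname{codim}_XZ\ge\min\{n+1,q\}-1$, and we are done in this case.

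The remaining, and main, case is $Z\not\subseteq D_i$ for all $i$. Then each $P_i|_Z$ is a nonzero section of the ample line bundle $\Ocal_X(d_i)|_Z$, cutting out a nonempty effective divisor $E_i:=D_i\cap Z$ on $Z$ — nonempty because an ample divisor meets every positive-dimensional subvariety — so $E_i$ is pure of codimension $1$ in $Z$. From $Z\subseteq\{Q_i=0\}$ we get $P_1^{d_i}|_Z=\mu_iP_i^{d_1}|_Z$, hence the divisor identity $d_iE_1=d_1E_i$ on $Z$; in particular $\operatorname{supp}E_1=\operatorname{supp}E_i$, i.e.\ $D_1\cap Z=D_i\cap Z$ as sets for every $i$. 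Therefore $E_1=D_1\cap Z=\bigl(\bigcap_{i=1}^qD_i\bigr)\cap Z\subseteq\bigcap_{i=1}^qD_i$. When $q\ge n+1$ the right-hand side is empty, contradicting $E_1\ne\emptyset$, so $f$ must be constant; when $q\le n$ it has dimension $n-q$, whence $\dim Z-1=\dim E_1\le n-q$ and $\operatorname{codim}_XZ\ge q-1$. This establishes the dimension bound in all cases; algebraic degeneracy for $q\ge2$ and $\KK$-hyperbolicity for $q\ge n+1$ then follow at once.

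I expect the main obstacle to be organizing the passage from the scalar constraints $P_i(\tilde f)\equiv c_i$ to a genuine codimension estimate: the useful output is not any single hypersurface $\{Q_i=0\}$ but the observation that proportionality of the divisors $E_i$ forces all the slices $D_i\cap Z$ to coincide and hence to lie inside the full intersection $\bigcap_iD_i$, at which point general position (condition (i) or (ii)) does the work. One must also take care with the degenerate sub-case $Z\subseteq D_i$ and with the elementary ingredients invoked: triviality of line bundles on $\AA^1$, the fact that nonvanishing entire functions over $\KK$ are constant, that an ample effective divisor meets every positive-dimensional closed subvariety, and that a nonzero effective divisor on an irreducible variety is pure of codimension one.
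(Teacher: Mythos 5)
Your argument is correct, and it reaches the codimension bound by a genuinely different route from the paper's. The paper first normalizes all the $P_i$ to a common degree by raising them to suitable powers, so that the image lies in $W = X \cap \bigcap_{i\ge 2}\{P_i - c_iP_1 = 0\}$, and then proves that every component of $W$ has codimension $q-1$ by a local commutative-algebra argument: at a point $x \in \bigcap_i D_i$ it shows $[\bar P_1, \delta_2, \dots, \delta_q]$ is a regular sequence in $\Ocal_{X,x}$, using that $[\bar P_1,\dots,\bar P_q]$ is one by general position and the Cohen--Macaulayness of the smooth local ring; for $q \ge n+1$ it then descends to an irreducible curve and invokes the two-point non-archimedean Picard theorem (Lemma~\ref{curve}). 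You avoid both the degree normalization (working with $P_1^{d_i} - \mu_i P_i^{d_1}$ instead) and the regular-sequence computation: the proportionality $d_iE_1 = d_1E_i$ forces all the slices $D_i\cap Z$ to coincide and hence to sit inside $\bigcap_i D_i$, after which Krull's principal ideal theorem together with the dimension of the full intersection gives $\dim Z - 1 \le n-q$. This is shorter, uses only the ampleness of $\Ocal_X(1)$ and general position rather than the nonsingularity of $X$, and disposes of the case $q\ge n+1$ without Lemma~\ref{curve}, since there $E_1$ would be a nonempty subset of the empty set. Two minor remarks: your first case $Z \subseteq D_i$ is vacuous, because $f(\KK)$ is dense in the irreducible $Z$ and disjoint from each $D_i$; and the divisor identity $d_iE_1=d_1E_i$ is more than you need, as the set-theoretic equality $\operatorname{supp}E_1 = \operatorname{supp}E_i$, immediate from $P_1^{d_i} = \mu_iP_i^{d_1}$ on $Z$, already suffices.
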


The following example shows that the theorem is sharp.

\medskip
\noindent
{\bf Example.}
Let $X=\PP^n$ and  $D_1$,...,$D_q$, $q\le n$, be the
coordinate hyperplanes $\{X_{n-q+1}=0\}$,...,$\{X_{n}=0\}$.
Let $f_0$,...,$f_{n-q}$ be algebraically independent $K$-analytic
functions. Then
$f=(f_0,f_1,...,f_{n-q},1,...,1)$ is a non-constant analytic map into
$\PP^n\setminus
\cup_{i=1}^qD_i$, and the codimension of  the Zariski closure of its image
is
$q-1$.

We will need the following well-known result of non-archimedean Picard's
theorem.

\begin{lemma}\label{curve} Let
$C$ be a irreducible projective curve. Then $C \setminus \{\text{two
distinct
points}\}$ is $\KK$-hyperbolic.
\end{lemma}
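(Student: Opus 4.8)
The plan is to pass to the normalization of $C$ and reduce to two model situations: $\PP^1$ with two points deleted, and smooth projective curves of genus at least $1$. Let $f:\KK\to C\setminus\{p,q\}$ be an analytic map; if $f$ is constant we are done, so assume it is not. Write $\nu:\tilde C\to C$ for the normalization, a finite birational morphism with $\tilde C$ a smooth irreducible projective curve, and fix $\tilde p\in\nu^{-1}(p)$, $\tilde q\in\nu^{-1}(q)$; since $p\ne q$ we have $\tilde p\ne\tilde q$.

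The first step is to lift $f$ to $\tilde C$. Over the smooth locus $C_{\mathrm{sm}}$ the map $\nu$ is an isomorphism, so away from the discrete set $Z=f^{-1}(C\setminus C_{\mathrm{sm}})$ the composite $\nu^{-1}\circ f$ is a well-defined analytic map $\KK\setminus Z\to\tilde C$; near each point of $Z$ it lands in a single analytic branch of $C$, because a non-archimedean punctured disc is connected whereas the finitely many branches through a singular point become disjoint once that point is removed. Since $\tilde C$ is projective, this map extends across the points of $Z$ to an analytic map $\tilde f:\KK\to\tilde C$ with $\nu\circ\tilde f=f$. As $\nu(\tilde f(z))=f(z)\notin\{p,q\}$ for every $z$, we get $\tilde f(\KK)\subseteq\tilde C\setminus\nu^{-1}(\{p,q\})\subseteq\tilde C\setminus\{\tilde p,\tilde q\}$, and it suffices to show that $\tilde f$ is constant.

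Now we split on the genus of $\tilde C$. If $\tilde C$ has genus $0$, then $\tilde C\cong\PP^1$ and a projective linear change of coordinates moves $\tilde p,\tilde q$ to $0,\infty$, so $\tilde C\setminus\{\tilde p,\tilde q\}\cong\PP^1\setminus\{0,\infty\}$ and $\tilde f$ becomes an entire function $h:\KK\to\KK$ with no zeros; by the non-archimedean Picard theorem, which says that a non-constant entire function on $\KK$ takes every value, $h$ is constant. If $\tilde C$ has genus at least $1$, then $\tilde C$ is itself $\KK$-hyperbolic, for instance because an Abel--Jacobi map embeds $\tilde C$ into its Jacobian, a non-archimedean abelian variety, and such varieties admit no non-constant analytic image of $\KK$; hence $\tilde f$ is constant. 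In either case $f$ is constant.

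I expect the lifting step to be the only genuine difficulty. It relies on two standard facts from non-archimedean analytic geometry: that an analytic map into a projective variety extends across an isolated puncture, and that such a map lifts through the normalization over the smooth locus. Granting these, the lemma is immediate from the non-archimedean Picard theorem in genus $0$ and from the $\KK$-hyperbolicity of positive-genus curves otherwise.
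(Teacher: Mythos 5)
The paper offers no proof of this lemma at all: it is stated as a ``well-known result'' and used as a black box (the same normalization-and-lift device reappears, again without justification, in the proof of Theorem~\ref{transversal}). So there is no in-paper argument to compare with; your sketch is the standard proof, and its overall architecture --- lift through the normalization $\nu:\tilde C\to C$, then split into the genus-$0$ case (an entire nonvanishing function on $\KK$ is constant) and the genus $\ge 1$ case ($\tilde C$ itself is $\KK$-hyperbolic, by Cherry's theorem on analytic maps into abelian varieties) --- is correct and is exactly what the authors are implicitly invoking. Two small caveats on the second branch: the reduction to the Jacobian needs a citation rather than a one-line assertion, and since this paper works over an algebraically closed complete field of \emph{arbitrary characteristic}, you should check that the version of Cherry's theorem you cite covers positive characteristic.

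The one genuine flaw is in the justification of the extension across the punctures $Z=f^{-1}(C\setminus C_{\mathrm{sm}})$. The ``standard fact'' you invoke --- that an analytic map from a punctured disc into a projective variety extends across the puncture --- is false, non-archimedeanly just as over $\CC$: a Laurent series such as $\sum_{n\ge 1}\pi^{n^2}z^{-n}$ (with $|\pi|<1$) converges on all of $\KK\setminus\{0\}$ and defines a map to $\PP^1$ with an essential singularity at $0$. Projectivity alone does not give removability. What saves your argument is the \emph{finiteness} of $\nu$: once you know (by your connectedness-of-the-punctured-disc argument) that near $z_0\in Z$ the partial lift lands in a single sheet $V_i$ over a small neighborhood $U$ of the singular point, each affine coordinate $g$ of $\tilde f$ on $V_i$ is integral over $\Ocal_{C,x}$, so $g$ satisfies a monic equation $g^m+a_{m-1}(f)g^{m-1}+\cdots+a_0(f)=0$ with the $a_j(f)$ analytic, hence bounded, on the full disc; therefore $|g|$ is bounded on the punctured disc, and a \emph{bounded} analytic function on a non-archimedean punctured disc does extend (its negative Laurent coefficients satisfy $|a_{-n}|\le r^n\sup|g|$ for all small $r$, hence vanish). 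Replace the false general extension claim with this boundedness argument and the lifting step is sound.
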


\begin{proof}[Proof of Theorem \ref{degenerate}]
Denote by $d_i:=\deg P_i$ and let $l$ be the least common multiple of
$d_i$, $1\le i\le q$. Without loss of generality, we may assume $\deg
P_i=d$ for each $1\le i\le q$ after replacing
$P_i$ by $P_i^{\frac{l}{d_i}}$.

Let $(f_0,...,f_N)$ be a reduced
representation of
$f$, i.e.,
$f_0,...,f_N$,
are $\KK$-analytic functions with no common zero and $f=[f_0:...:f_N]$.
Then
$f :\KK \to X\setminus \cup_{i=1}^q D_i$ implies that
$P_i(f_0,...,f_N)$ $(1\le i\le q)$ is an analytic function with no
zero.
By the non-archimedean Picard's theorem, $P_i(f_0,...,f_N)$ is a
non-zero
constant for each
$1\le i\le q$.
Then there exist non-zero constants
$c_2,...,c_{q}$ such that
$P_{i} (f(z))-c_{i}P_1 (f(z))=0$
for all $z\in \KK$ and
$2\le i\le q$.
Let $W_i$ be the algebraic subset in $\PP^N$ defined by
$P_{i} -c_{i}P_1 =0$.
Then the image of
$f$ is contained in
$W=X\cap_{i=2}^{q} W_i$.
We will show that the codimension of each component of $W$ is
$q-1$ if $q\le n$ and $W$ is a finite set of points  if $q\ge n+1$.
We first consider when $q\le n$. In this case we have
$\cap_{i=1}^{q}D_i$ is not empty.
Let $x\in \cap_{i=1}^{q}D_i$.  Since $X$ is irreducible, it suffices to
compute the codimension of $W$ in an open subset of $X$.  In
particularly, we take an open neighborhood $U_x$ of $x$.  Rearranging the
coordinates if necessary, we may assume that $X_0(x)\ne 0$, and denote by
$\bar P_i=P_{i} /X_0^{d}$ for each $1\le i\le q$. Then the defining
equation of
$W_i$ in
$U_x$ can be replaced by
\begin{align}\label{gi}
\d_i:=\bar P_{i}
-c_{i}\bar P_1,\qquad  2\le i\le q.
\end{align}
We will show that $[\bar P_1,\d_2,...,\d_q]$ is a regular sequence of
the local ring $\Ocal_{X,x}$, and therefore so is $[\d_2,...,\d_q]$
which implies the codimension of each component of $W$ in $X$ is $q-1$.
 For $2\le r\le q$, it is clear that
$(\bar P_1,\d_2,...,\d_r)=(\bar P_1,..., \bar P_r)$ as ideals in
$\Ocal_{X,x}$.   Suppose that $\delta_r$ is a zero divisor
of $\Ocal_{X,x}/(P_1, \delta_2, ..., \delta_{r-1}).$
Then there
exist $G\in \Ocal_{X,x}$ and $G\not\in(\bar P_1, \delta_2,
..., \delta_{r-1})$ such that $G(\bar P_r-c_r\bar P_1)\in (\bar P_1,
\delta_2, ..., \delta_{r-1})=(\bar P_1,..., \bar P_r)$. This then implies
that  $G\bar P_r\in (\bar P_1, ...,\bar P_{r-1})$.
Since $G\not\in(\bar P_1, \delta_2,
..., \delta_{r-1})=(\bar P_1, ...,\bar P_{r-1})$, this means that
$[\bar P_1, ..., \bar P_{q}]$ is not a regular sequence of $\Ocal_{X,x}$.
However, since $D_1,...,D_q$ are in general
position,  $[\bar P_1, ..., \bar P_{q}]$ is a regular sequence of
$\Ocal_{X,x}$.  This gives a contradiction and shows that
$[\bar P_1,\d_2,...,\d_q]$ is a regular sequence of
the local ring $\Ocal_{X,x}$

When $q\ge n+1$, it follows from the previous arguments that the codimension of
$X\cap W_2\cap
\cdots\cap W_{n}$ is $n-1$.  Hence, the image of $f$ is contained
in an irreducible curve $C$.
 Since $q\ge n+1$ and $D_i$, $1\le i\le q$, are in general position,
it is clear that
$  C\cap (\cup_{i=1}^q D_i )$ contains at least two distinct points.
Therefore, the image of $f$ is contained in the curve $C$ omitting
 two points, and hence $f$ is constant by
  Lemma~\ref{curve}.
\end{proof}


\section{Results in projective spaces}

\begin{defi}
Nonsingular hypersurfaces $D_1, ..., D_n$ in $\PP^n(\KK)$  {\it
intersect
transversally} if for every point $x\in\cap_{i=1}^n D_i$,
$\cap_{i=1}^n\Theta_{D_i,x}=\{x\}, $
where $\Theta_{D_i,x}$ is the tangent space to $D_i$  at $x$.
\end{defi}

\begin{theorem}\label{transversal} Let
$D_1, ..., D_n$ be nonsingular hypersurfaces in $\PP^n(\KK)$ intersecting
transversally.
Then $\PP^n\setminus \cup_{i=1}^n D_i$ is $\KK$-hyperbolic if  \ $\deg
D_i\ge 2$ for each $1\le i\le n$.
\end{theorem}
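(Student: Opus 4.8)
The plan is to reduce to the curve case from Lemma~\ref{curve} by showing that any analytic map $f\colon\KK\to\PP^n\setminus\cup_{i=1}^nD_i$ must have image contained in an irreducible curve meeting $\cup_{i=1}^nD_i$ in at least two points. Start exactly as in the proof of Theorem~\ref{degenerate}: normalize all $P_i=0$ defining $D_i$ to have a common degree $d$ (replacing each $P_i$ by a power), take a reduced representation $f=[f_0:\dots:f_n]$, and observe that each $P_i(f_0,\dots,f_n)$ is a zero-free analytic function, hence a nonzero constant by non-archimedean Picard. As before this forces $P_i(f)-c_iP_1(f)\equiv 0$ for nonzero constants $c_i$, $2\le i\le n$, so the image of $f$ lies in $W:=\bigcap_{i=2}^n\{P_i-c_iP_1=0\}$.

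**Next I would** analyze the variety $W$. Unlike the general-position case, here $D_1,\dots,D_n$ need not be in general position in the sense of Section~2 — transversality is a stronger local condition at points of $\bigcap_iD_i$ but a priori says nothing about lower-order intersections. However, transversality at each $x\in\bigcap_{i=1}^nD_i$ implies that the defining polynomials $\bar P_1,\dots,\bar P_n$ (dehomogenized near $x$) have linearly independent differentials at $x$, hence form a regular sequence in $\Ocal_{\PP^n,x}$; so the argument of Theorem~\ref{degenerate} applies verbatim to show $[\delta_2,\dots,\delta_n]$ is a regular sequence and $W$ has pure codimension $n-1$ near each such $x$. The key extra input I need is that $W$, or at least the component containing the image of $f$, is an irreducible \emph{curve} that genuinely meets the divisors. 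Since $\bigcap_{i=1}^nD_i\ne\emptyset$ (each $D_i$ is a hypersurface in $\PP^n$, and by B\'ezout-type reasoning the intersection of $n$ hypersurfaces is nonempty), and since at such a point $W$ passes through with codimension $n-1$, the curve $C$ through $x$ lies in $W\cap X$ and contains $x\in\bigcap D_i$.

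**The crux** — and the step I expect to be the main obstacle — is producing a \emph{second} point of $C\cap\bigl(\cup_{i=1}^nD_i\bigr)$, using the hypothesis $\deg D_i\ge 2$. In the general-position case with $q\ge n+1$ divisors this was automatic; here we have only $n$ divisors so the naive count fails, and one must exploit that the $D_i$ have degree $\ge 2$. The idea is that $P_1|_C$ is a nonconstant rational function on the projective curve $C$ (it cannot be constant, else $C\subset\{P_1=c\}$ would be disjoint from all $D_i$ including $x\in D_1$, a contradiction), so it takes the value $0$ and the value $\infty$; the zero locus gives points of $C\cap D_1$. One must check that $C\cap D_1$ contains a point other than $x$ (or that $C$ meets some other $D_j$ away from $x$): this is where $\deg P_1|_C\ge 2$ enters, forcing either multiple preimages of $0$ or additional intersection points with the divisor $D_1$ of degree $\ge 2$. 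Concretely, $\deg(C\cap D_1)=\deg C\cdot\deg D_1\ge 2\deg C\ge 2$, and if this intersection were a single point $x$ with all multiplicity, the local geometry at $x$ together with transversality of $D_1,\dots,D_n$ at $x$ would have to be ruled out — a local computation comparing the tangent direction of $C$ at $x$ with $\Theta_{D_1,x}$.

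**Once** $C\cap\bigl(\cup_{i=1}^nD_i\bigr)$ is shown to contain two distinct points, the image of $f$ lies in $C$ minus two points, so $f$ is constant by Lemma~\ref{curve}, and $\PP^n\setminus\cup_{i=1}^nD_i$ is $\KK$-hyperbolic. I would organize the write-up as: (1) the Picard normalization and reduction to $W$, identical to Theorem~\ref{degenerate}; (2) the regular-sequence argument showing $\dim W=1$, invoking transversality to get independence of differentials; (3) a separate lemma handling the two-point count on $C$, isolating the degree-$\ge 2$ hypothesis and the local transversality computation, which is the genuinely new ingredient.
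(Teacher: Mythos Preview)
Your overall architecture matches the paper: invoke Theorem~\ref{degenerate} to land the image of $f$ in an irreducible curve $C$, then apply Lemma~\ref{curve}. However, the endgame has a genuine gap.

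You aim to show that $C\cap\bigl(\cup_{i=1}^n D_i\bigr)$ contains at least two \emph{distinct} points, and you propose to rule out the single-point case via ``a local computation comparing the tangent direction of $C$ at $x$ with $\Theta_{D_1,x}$''. That comparison only works when $C$ is \emph{smooth} at $x$: if $C$ has a unique tangent line $L$ at $x$, then $(C,D_i)_x\ge 2$ forces $L\subset\Theta_{D_i,x}$ for every $i$, contradicting $\cap_i\Theta_{D_i,x}=\{x\}$. But nothing prevents $C$ from being \emph{singular} at $x$, in which case $C$ may have several tangent directions and the single-point intersection $C\cap\bigl(\cup_i D_i\bigr)=\{x\}$ is not excluded by your argument. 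This is exactly the case the paper has to handle, and it does so by passing to the normalization: the intersection-multiplicity bound $(C,D_i)_x\ge\deg D_i\ge 2$ forces $\Theta_{C,x}\cap\Theta_{D_i,x}\supsetneq\{x\}$ for every $i$, and since $\cap_i\Theta_{D_i,x}=\{x\}$ the curve $C$ must carry at least two distinct tangent lines at $x$, hence at least two local branches. Letting $\pi:\tilde C\to C$ be the normalization, one gets $\#\pi^{-1}(x)\ge 2$; the lift $\tilde f:\KK\to\tilde C\setminus\pi^{-1}(x)$ then omits two points and is constant by Lemma~\ref{curve}, whence $f=\pi\circ\tilde f$ is constant. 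Your outline is missing this normalization step entirely.

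A smaller point: your detour about whether transversality implies general position is unnecessary. For $q=n$ divisors, general position in the paper's sense just means $\cap_{i=1}^n D_i$ has codimension $n$, i.e.\ is finite. Transversality at each point of $\cap_i D_i$ says the differentials $d\bar P_1,\dots,d\bar P_n$ are linearly independent there, so the intersection is zero-dimensional at every one of its points and hence finite. Thus Theorem~\ref{degenerate} applies directly and you may simply cite it to obtain the curve $C$, rather than re-running the regular-sequence argument.
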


\begin{remark}
The assumption on the degree of the hypersurfaces is sharp. For
example, in the
case of $\PP^2$, we may choose $D_1=\{X_0=0\}$ and
$D_2=\{X_0^2+X_1^2-X_2^2=0\}$
and let $f(z)=(1,z,z)$. It is easy to check that $D_1$ and $D_2$
intersect transversally, and it is clear that
$f$ is a non-constant analytic map into
$\PP^2\setminus\{D_1\cup D_2\}.$  For general $\PP^n$, we may choose
$D_1=\{X_0=0\}$, and $D_i=\{ X_0^2+a_{i1}X_1^2+\cdots+a_{in}X_n^2=0\}$
with
$a_{i1}+\cdots+a_{in}=0$ for $2\le i\le n$. Furthermore,  we may assume
that every $n-1$ by $n-1$ submatrix of the matrix
$(a_{ij})_{i,j}$, ${2\le i\le n}, {1\le j\le n}$, has rank $n-1$. Then
these hypersurfaces intersect transversally.
Clearly, the
analytic map $f(z)=(1,z,z,...,z)$ does not intersect any of the
hypersurfaces $D_i$, $1\le i\le n$.
\end{remark}

\begin{proof}[Proof of Theorem \ref{transversal}]
Assume that there exits an analytic map $f:\KK\to \PP^n\setminus
\cup_{i=1}^n D_i$.
By Theorem~\ref{degenerate}, we see that the image of $f$ is contained
in an
irreducible curve $C$. Then $f$ is an analytic map from $\KK$ into
$C\setminus
\cup_{i=1}^n D_i$. If
$C\cap\{\cup_{i=1}^n D_i\}$ consists at least two points, then it
follows from
Lemma~\ref{curve} that $f$ is a constant. It then remains to consider
when
$C\cap\{\cup_{i=1}^n D_i\}$ consists exactly only one point $x$. Since
$\dim C+\dim D_i-n\ge 0$, we have
$C\cap
D_i\not=\emptyset$ for each $i$. This can only happen when $x\in
\cap_{i=1}^n D_i$
and $C\cap D_i=\{x\}$ for each $i$. By Bezout's theorem, we have
$$
(C,D_i)_x=\deg C.\deg D_i\ge \deg D_i\ge 2
$$
for each $i$.
Therefore, $\Theta_{C,x}\cap\Theta_{D_i,x}\supsetneq\{x\}$ for each
$i$.
Since
$\cap_{i=1}^n\Theta_{D_i,x}=\{x\}$, this shows that $C$ must have at
least
2
different tangent lines at the point $x$. Let
$\pi : \tilde{C} \to C$ be the normalization of $C$. Then
$\# \pi^{-1}(x) \ge 2$. If $f : \KK \to C \setminus \{x\}$
then its lifting $\tilde{f} : \KK \to \tilde{C}$ misses
$\pi^{-1}(x)$ containing at least 2 points thus $\tilde{f}$ is a
constant and so $f = \pi \circ \tilde{f}$ is a constant.
\end{proof}

For the rest of this section we consider the particular case when $n=2$.

\begin{defi} Let $D$ be a curve of degree $d\ge 3$ in
$\PP^2$.
A nonsingular point $x$ of $D$ is said to be a
{\it maximal inflexion point}
if there exits a line intersecting $D$ at $x$ with multiplicity $d$.
\end{defi}

\begin{remark} The curve $X^d-YZ^{d-1}=0$ has  a maximal inflexion point
$P=(0,0,1)$ if $d\ge 3$.  Every smooth cubic has 9 maximal inflexion
points counting multiplicities which are indeed the inflexion points.
Since a maximal inflexion point is an inflexion point, the coefficients
of the defining equation of the curve need to satisfy an algebraic
equation (i.e. its Hessian form cf.
\cite{Sa}). Therefore, it is not difficult to see that a generic curve of
degree
$d\ge 4$ has no maximal inflexion points.
\end{remark}

 For simplicity of notation, we will use $D$ to denote
the defining
equation of the curve $D$.
We have the following.

\begin{theorem}\label{P2}
Let $D_1$ and $D_2$ be nonsingular projective curves in $\PP^2$.
Assume that $D_1$ and $D_2$ intersect transversally and $\deg D_1\le
\deg
D_2$.
Then $\PP^2\setminus\{D_1\cup D_2\}$ is $\KK$-hyperbolic if and only if
either
$\deg D_1, \deg D_2\ge 2$ or $\deg D_1=1$, $\deg D_2\ge 3$ and $D_1$
does not
intersect $D_2$ at any maximal inflexion point.
\end{theorem}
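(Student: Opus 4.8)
The plan is to prove both implications by reducing, via Theorem~\ref{degenerate}, to the geometry of a single irreducible plane curve $C$ carrying the image of $f$, and then to run a local intersection–multiplicity analysis at the points where $C$ meets $D_1\cup D_2$. Write $d_i=\deg D_i$, so $d_1\le d_2$, and for a curve $A$ unibranch at $x$ let $\Theta_{A,x}$ denote the (unique) tangent line of its branch at $x$.

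For the ``if'' (hyperbolicity) direction, let $f:\KK\to\PP^2\setminus\{D_1\cup D_2\}$ be analytic. By Theorem~\ref{degenerate} the Zariski closure of its image is an irreducible curve $C$ (if it is a point, $f$ is already constant), and $C\ne D_1,D_2$ because $f$ avoids $D_1\cup D_2$. Since any two curves in $\PP^2$ meet, $C\cap\{D_1\cup D_2\}\ne\emptyset$; if it contains at least two points, $f$ is constant by Lemma~\ref{curve}. So assume $C\cap\{D_1\cup D_2\}=\{x\}$; then $x\in D_1\cap D_2$ and $C\cap D_i=\{x\}$ for $i=1,2$. Passing to the normalization $\pi:\tilde C\to C$, the lifting $\tilde f:\KK\to\tilde C$ misses $\pi^{-1}(x)$, so if $C$ has at least two branches at $x$ then $\#\pi^{-1}(x)\ge 2$ and $f$ is constant, once more by Lemma~\ref{curve}. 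Hence the whole matter reduces to showing that, under the stated degree hypothesis, $C$ cannot be unibranch at $x$.

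To do this I would invoke the classical facts that, for a curve $D$ smooth at $x$, $(C,D)_x\ge \operatorname{mult}_x C$ with equality unless $\Theta_{D,x}$ is a component of the tangent cone of $C$ at $x$, together with $\operatorname{mult}_x C\le\deg C$, equality holding (for irreducible $C$) only when $C$ is a line. Assume $C$ is unibranch at $x$, so its tangent cone is a single line $\Theta_{C,x}$. Transversality of $D_1,D_2$ gives $\Theta_{D_1,x}\ne\Theta_{D_2,x}$, hence $\Theta_{D_i,x}\ne\Theta_{C,x}$ for at least one $i$; for that $i$, Bézout and $C\cap D_i=\{x\}$ give $d_i\cdot\deg C=(C,D_i)_x=\operatorname{mult}_x C\le\deg C$, forcing $d_i=1$ and therefore $d_1=1$. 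If in addition $\Theta_{D_2,x}=\Theta_{C,x}$, then applying the estimate to $D_1$ forces $\operatorname{mult}_x C=\deg C$, so $C$ is a line, necessarily the tangent line of $D_2$ at $x$, and $(C,D_2)_x=d_2$ shows that either $d_2\le 2$ or $d_2\ge 3$ and $x$ is a maximal inflexion point of $D_2$ lying on $D_1$; if instead $\Theta_{D_2,x}\ne\Theta_{C,x}$, the estimate applied to $D_2$ gives $d_2=1$. In every case one lands in exactly the configuration excluded by the hypothesis, so $f$ is constant, i.e. $\PP^2\setminus\{D_1\cup D_2\}$ is $\KK$-hyperbolic.

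For the ``only if'' direction I would conversely produce a non-constant analytic map whenever the hypothesis fails, i.e. whenever $d_1=1$ and either $d_2\le 2$ or $D_1$ passes through a maximal inflexion point $x$ of $D_2$. In each case there is a line $C$ meeting $D_1\cup D_2$ in a single point $p$: if $d_1=d_2=1$, take a line through $p=D_1\cap D_2$ other than $D_1,D_2$; if $d_1=1$, $d_2=2$, take for $p$ one of the two transverse intersection points of the line $D_1$ with the conic $D_2$ and let $C$ be the tangent to $D_2$ at $p$; if $d_1=1$, $d_2\ge 3$ and $x\in D_1$ is a maximal inflexion point of $D_2$, put $p=x$ and let $C$ be a line meeting $D_2$ at $x$ with multiplicity $d_2$. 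In each case $C\ne D_1$ (otherwise $D_1$ would meet $D_2$ at $p$ to order $\ge 2$, contradicting transversality), so the two distinct lines $C$ and $D_1$ meet only at $p$; and $C\cap D_2=\{p\}$ by construction. Hence $C\setminus\{p\}\cong\KK$ lies inside $\PP^2\setminus\{D_1\cup D_2\}$, and the inclusion is a non-constant analytic map, so $\PP^2\setminus\{D_1\cup D_2\}$ is not $\KK$-hyperbolic. The main obstacle I anticipate is precisely the local step of the third paragraph: controlling $(C,D_i)_x$ for an irreducible $C$ that is unibranch but possibly singular and of large degree at $x$, and squeezing out of the equality $(C,D_i)_x=\operatorname{mult}_x C$ the structural fact that $C$ must in fact be a line tangent to $D_2$ — this is exactly the point at which the notion of maximal inflexion point is forced upon us and has to be matched verbatim with the degree conditions in the statement.
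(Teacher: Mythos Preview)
Your proof is correct, and the overall architecture matches the paper's: reduce via Theorem~\ref{degenerate} to an irreducible curve $C$ carrying the image of $f$, use Lemma~\ref{curve} to force $C\cap(D_1\cup D_2)=\{x\}$, and then pass to the normalization so that only the unibranch case at $x$ remains. The ``only if'' constructions you give are exactly those of the paper (Lemma~\ref{smalldegree} together with the explicit tangent-line map at a maximal inflexion point), stated geometrically rather than in coordinates.

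The genuine difference is in the local step of the ``if'' direction. The paper does not run a single argument: it first invokes Theorem~\ref{transversal} wholesale for the case $d_1,d_2\ge 2$, and for $d_1=1$, $d_2\ge 3$ it splits again on $\deg C$, citing the proof of Theorem~\ref{transversal} when $\deg C\ge 2$ and handling $\deg C=1$ by hand. You instead give a uniform tangent-cone/multiplicity argument: from unibranch at $x$ and $\Theta_{D_1,x}\ne\Theta_{D_2,x}$ you get $(C,D_i)_x=\operatorname{mult}_xC\le\deg C$ for some $i$, which forces $d_i=1$ and, pushing further, forces $C$ to be the tangent line of $D_2$ at a maximal inflexion point lying on $D_1$. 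This is cleaner and a bit sharper than the paper's Zariski-tangent-space argument (which, as written, is slightly informal at a possible cusp of $C$), and it has the virtue of not appealing to Theorem~\ref{transversal} as a black box; the price is that you need the equality case of $(C,D)_x\ge\operatorname{mult}_xC$ and of $\operatorname{mult}_xC\le\deg C$, both of which are standard.
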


The non-archimedean analogue of the Kobayashi-Zaidenberg conjecture for
the case of $\PP^2$ omitting two generic curves follows directly.

\begin{cor}\label{generic}
Let $D_1$ and $D_2$ be distinct generic curves in $\PP^2$.
If $\deg D_1+\deg D_2\ge 4$ then
$\PP^2\setminus\{D_1\cup D_2\}$ is $\KK$-hyperbolic.
\end{cor}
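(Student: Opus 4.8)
The plan is to derive the corollary entirely from Theorem~\ref{P2}, the only task being to check that a generic pair $(D_1,D_2)$ of curves of fixed degrees $d_1\le d_2$ with $d_1+d_2\ge 4$ satisfies that theorem's hypotheses. First I would record the standard genericity facts: a generic curve of degree $d$ in $\PP^2$ is nonsingular, and two generic curves of degrees $d_1,d_2$ intersect transversally. Indeed, curves of degree $d$ are parametrized by a projective space of dimension $\binom{d+2}{2}-1$, and the conditions ``$D$ is singular'' and ``$D_1$ meets $D_2$ non-transversally'' are each cut out by a proper closed (discriminant-type) subvariety of the relevant parameter space; so for a generic pair $(D_1,D_2)$ we may assume that $D_1,D_2$ are nonsingular and meet transversally, whence Theorem~\ref{P2} is applicable.

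Next I would split on the value of $d_1$. If $d_1\ge 2$, then $d_2\ge d_1\ge 2$, so the first alternative in Theorem~\ref{P2} holds and $\PP^2\setminus\{D_1\cup D_2\}$ is $\KK$-hyperbolic with no further argument. If $d_1=1$, then $d_2\ge 3$, and by Theorem~\ref{P2} it remains only to ensure that the line $D_1$ meets $D_2$ at no maximal inflexion point. Here I would invoke the Remark on maximal inflexion points: when $d_2\ge 4$ a generic curve of degree $d_2$ has no maximal inflexion point at all, so there is nothing to avoid; when $d_2=3$ the smooth cubic $D_2$ has only finitely many maximal inflexion points (its nine inflexion points, counted with multiplicity), and a generic line avoids this finite set, since the lines through any fixed point form a proper closed subset of the dual plane $(\PP^2)^\vee$. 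In every case the hypotheses of Theorem~\ref{P2} are met by the generic pair, and the conclusion follows.

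The only point requiring care is the bookkeeping: one must make precise that all the genericity requirements imposed above --- smoothness of $D_1$ and of $D_2$, transversality of $D_1\cap D_2$, and (depending on $d_2$) either the absence of maximal inflexion points of $D_2$ or the avoidance of the cubic's nine inflexion points by $D_1$ --- are each open dense conditions on the finite-dimensional parameter space of pairs $(D_1,D_2)$, so that their common locus is again open, dense, and nonempty. There is no geometric obstacle here; the substance of the corollary lies entirely in Theorem~\ref{P2} and in the Remark already quoted.
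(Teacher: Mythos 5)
Your proposal is correct and follows essentially the same route as the paper: reduce to Theorem~\ref{P2} by noting that nonsingularity and transversal intersection are generic conditions, then handle the case $\deg D_1=1$, $\deg D_2\ge 3$ by citing that a generic curve of degree at least $4$ has no maximal inflexion points and that a generic line misses the finitely many maximal inflexion points of a cubic. Your added remarks on the openness and density of these conditions in the parameter space only make explicit what the paper leaves implicit.
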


\begin{proof}
Since two curves intersecting transversally is a generic condition, by the
previous theorem we only need to verify that for generic curves $D_1$ and
$D_2$ with
$\deg D_1=1$,
$\deg D_2\ge 3$,  $D_1$
does not
intersect $D_2$ at any maximal inflexion point.
 A generic curve $D_2$ of  degree at least 4 has  no  maximal
inflexion point.
When the curve $D_2$ has degree 3 then
it has at most 9 maximal inflexion points.   In this case, a generic
line $D_1$ does not intersect $D_2$
 at any maximal inflexion points.
The corollary then follows directly from Theorem \ref{P2}.
\end{proof}

To prove Theorem 4, we first study some cases that
$\PP^2\setminus\{D_1\cup D_2\}$ fails to be $\KK$-hyperbolic.

\begin{lemma}\label{smalldegree}
$\PP^2\setminus\{D_1\cup D_2\}$ is not $\KK$-hyperbolic if
\begin{enumerate}
\item[(i)] $\deg D_1=1$ and $\deg D_2\le 2$ or
\item[(ii)] $\deg D_1=\deg D_2=2$ and $D_1$ and $D_2$ intersect
tangentially.
\end{enumerate}
\end{lemma}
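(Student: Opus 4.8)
The plan is to show, in each case, that $\PP^2\setminus\{D_1\cup D_2\}$ is not $\KK$-hyperbolic by exhibiting an explicit non-constant analytic map $\KK\to\PP^2\setminus\{D_1\cup D_2\}$. The basic device is this: if $\ell$ is a line meeting $D_1\cup D_2$ in at most one point $p$, then $\ell\setminus\{p\}$ is isomorphic to the affine line $\KK$, and the induced inclusion $\KK\cong\ell\setminus\{p\}\hookrightarrow\PP^2\setminus\{D_1\cup D_2\}$ is a non-constant analytic map; when no such line is available I will instead write down a polynomial map in homogeneous coordinates. Here, as in Theorem~\ref{P2}, $D_1$ and $D_2$ are nonsingular, and all intersection numbers are computed by B\'ezout's theorem.

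For part (i), write $D_1=L$, a line. If $\deg D_2=1$, say $D_2=L'$, then any line $\ell$ through $L\cap L'$ other than $L$ and $L'$ meets $L\cup L'$ only at the point $L\cap L'$, which suffices. Suppose now $\deg D_2=2$, so $D_2$ is a smooth conic with $L\cap D_2\neq\emptyset$. If $L$ is not tangent to $D_2$, choose $p\in L\cap D_2$ and let $T=\Theta_{D_2,p}$ be the tangent line; then $T\neq L$, so $T$ meets $L$ only at $p$, while $(T,D_2)_p\ge 2$ together with $\deg T\cdot\deg D_2=2$ forces $T\cap D_2=\{p\}$. Hence $T\cap(L\cup D_2)=\{p\}$ and $T\setminus\{p\}$ gives the map. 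The delicate sub-case is $\deg D_2=2$ with $L$ tangent to $D_2$: every line through the unique point $p=L\cap D_2$ is either $L$ itself or meets $D_2$ in a second point, so the line trick fails. Using that a smooth conic together with one of its tangent lines forms a single projective equivalence class, I would reduce to $L=\{X_0=0\}$ and $D_2=\{X_1^2=X_0X_2\}$, for which $z\mapsto[1:z:z^2+1]$ is a non-constant polynomial map avoiding $L$ (its first coordinate never vanishes) and avoiding $D_2$ (since $z^2\neq z^2+1$). Alternatively, for all but finitely many $t\in\KK$ the conic $C_t=\{D_2+tL^2=0\}$ is smooth and satisfies $C_t\cap(L\cup D_2)=\{p\}$, and then $C_t\setminus\{p\}\cong\KK$ maps into the complement.

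For part (ii), let $p$ be a point at which the smooth conics $D_1$ and $D_2$ are tangent, and let $L=\Theta_{D_1,p}=\Theta_{D_2,p}$ be their common tangent line there. For $i=1,2$ we have $(L,D_i)_p\ge 2$, while the total intersection number is $\deg L\cdot\deg D_i=2$; hence $L\cap D_i=\{p\}$, so $L\cap(D_1\cup D_2)=\{p\}$, and $L\setminus\{p\}\cong\KK$ maps non-constantly into $\PP^2\setminus\{D_1\cup D_2\}$. This proves the lemma.

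I expect the only genuine obstacle to be the tangent sub-case of part (i): there the naive tangent-line argument breaks down, and one must produce a genuinely new rational curve, or the explicit map above, meeting $D_1\cup D_2$ in a single point; every other case reduces to an immediate B\'ezout count.
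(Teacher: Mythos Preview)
Your proof is correct and follows essentially the same strategy as the paper: in every case you exhibit a rational curve meeting $D_1\cup D_2$ in a single point---a line through the relevant intersection point whenever one is available, and a conic in the tangent sub-case of (i). The paper handles that sub-case with a slightly less aggressive normalization (writing $D_2$ as $a_0X_0^2+a_1X_1^2+a_2X_0X_1+X_0X_2$ rather than reducing all the way to $X_1^2=X_0X_2$), but the underlying construction---a conic tangent to $L$ at $\frak p$ and meeting $D_2$ only at $\frak p$---is identical to yours.
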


\begin{proof}
We will construct a non-constant analytic map $f$ from
$\KK$ into $\PP^2\setminus\{D_1\cup D_2\}$ which implies that
$\PP^2\setminus\{D_1\cup D_2\}$ is not $\KK$-hyperbolic.
We first consider when $\deg D_1=\deg D_2=1$.  Let $\frak{p}$ be the
intersection point of $D_1$ and $D_2$ and $L\ne D_i$, $i=1,2$, is a line
passing through
$\frak{p}$. Then we can construct a non-constant analytic  map $f$ from
$\KK$ into
$\PP^2$ such that its image is contained  $L\setminus \frak{p}$.  Then
this gives a non-constant analytic map into
$\PP^2\setminus\{D_1\cup D_2\}$.  The following is an explicit
construction. By linear change of coordinates, we may assume that
$D_1=\{X_0=0\}$ and
$D_2=\{X_1=0\}$. Let
$f(z)=(1,1,z)$, then $D_1(f)=D_2(f)=1$. In other words, $f$ is a
non-constant
analytic map from $\KK$ into $\PP^2\setminus\{D_1\cup D_2\}$.
Indeed, its image is in $L\setminus \frak{p}$ where $L=\{X_0-X_1=0\}$
and  $\frak{p}=(0,0,1).$

Next, we consider when $\deg
D_1=1$ and $\deg D_2=2$.  In this case, the intersection of
$D_1$ and $D_2$ contains at most two distinct points.
If the intersection contains only one point $\frak{p}$,
then  $D_1$ is tangent to $D_2$ at $\frak{p}$.
We can produce a non-constant analytic map such that its image is  a
conic tangent to $D_1$ at $\frak{p}$ and omitting $\frak{p}$.
Indeed, after change
of coordinates, we may assume that $D_1=\{X_0=0\}$ and $\frak p=(0,0,1)$.
Both conditions force the defining condition of $D_2$ to be of the form
$\{a_0X_0^2+a_1X_1^2+a_2X_0X_1+a_3X_0X_2=0\}$.
Since $D_2$ is non-singular, it has to be irreducible. Therefore, we
may
further
assume that $a_1\ne 0$ and $a_3\ne 0$. Without loss of generality, we
let $a_3=1$.
Now let $f(z)=(1,z,1-a_0-a_2z-a_1z^2)$. Then $D_1(f)=D_2(f)=1$, which
shows that
$f$ is a non-constant
analytic map from $\KK$ into $\PP^2\setminus\{D_1\cup D_2\}$.
If $D_1$ and $D_2$ contains two points, i.e. $D_1$ is not a tangent
line of $D_2$.  In this case, we may produce a non-constant analytic map
whose image is  the line tangent to the conic $D_2$ at one of the
intersection points of $D_1$ and $D_2$ and omitting the intersection
point.  We will skip the explicit  construction since it is similar to the
previous one.

For case (ii),  as  $D_1$ and $D_2$ intersect tangentially, there is
exactly one intersection point.  Denote it by  $\frak{p}$.
We  can produce a non-constant analytic map such that its image
is contained in the common tangent line of two conics $D_1$ and $D_2$
and omits $\frak{p}$.  The explicit construction is similar to the
previous one, and we will omit it since the geometric meaning is
clear.
\end{proof}

\begin{proof}[Proof of Theorem \ref{P2}]
If $\deg D_1, \deg D_2\ge 2$ then it follows from
Theorem~\ref{transversal} that $\PP^2\setminus\{D_1\cup D_2\}$ is
$\KK$-hyperbolic.
We now consider when
$\deg D_1=1$, $\deg D_2\ge 3$ and $D_1$ does not intersect $D_2$ at any
maximal
inflexion point of $D_2$. Let
$f:\KK\to \PP^2\setminus \{D_1\cup D_2\}$ be an analytic map.
If it is non-constant, then by Theorem
\ref{degenerate}, the image of
$f$ is contained in an irreducible plane curve $C$. In other words, we
have an
analytic map $f:\KK\to C\setminus \{D_1\cup D_2\}$. By Lemma
\ref{curve},
$C\cap\{\cup_{i=1}^n D_i\}$ can consist of only one point, otherwise
$f$ must be constant. Let
$C\cap\{\cup_{i=1}^n D_i\}$ consist of only one point $\frak{p}$.
If $\deg C\ge 2$ has degree at least 2, then by Bezout's theorem, we
have
$$
(C,D_i)_\frak{p}=C\cdot D_i=\deg C\cdot \deg D_i\ge \deg C\ge 2
$$
for $i=1,2$. Then the arguments in the proof of
Theorem~\ref{transversal} show that
$f$ must be a constant. Therefore, it remains to consider when $\deg
C=1$. In this
case, $(C,D_2)_\frak{p}=\deg D_2$ and $\frak{p}\in D_1\cap D_2$. This
implies that
$\frak{p}$
is a
maximal inflexion point of $D_2$ and $D_1$ intersects $D_2$ in $\frak{p}$,
which is
impossible by the hypothesis. In conclusion, $f$ must be constant.

For the converse part, we have shown in Lemma \ref{smalldegree} that
$\PP^2\setminus\{D_1\cup D_2\}$ is not $\KK$-hyperbolic if $\deg D_1=1$
and $\deg
D_2\le 2$. It then remains to show for the case when
$\deg D_1=1$, $\deg D_1\ge 3$ and $D_1$ does intersect $D_2$ at a
maximal
inflexion point of $D_2$.
Without loss of generality, we let $\frak{p}=(0,0,1)$ is a maximal
inflexion point of $D_2$
and $L=\{X_0=0\}$ is the tangent line of $D_2$ at $\frak{p}$. Then the
intersection
multiplicity of $(L,D_2)_\frak{p}$ equals $\deg D_2$, and
therefore
$L\cap D_2=\{\frak{p}\}$ by Bezout's theorem.
On the other hand, as $D_1$ intersects
$D_2$ transversally $D_1\ne L$.  Moreover, since $\frak{p}\in D_1$,
 $D_1$ is defined by $bX_0+X_1$. Let $ f=(0, 1, z):\KK\to \PP^2$.
Then
$D_1(f)=1$ and the image of $f$ is contained in $L$.
Since $L\cap D_2=\{\frak{p}\}$ and $\frak{p}=(0,0,1)\notin f(\KK)$,
$f(\KK)\cap D_2=\emptyset$.
This shows that in this case $\PP^2\setminus \{D_1\cup D_2\}$ is not
$\KK$-hyperbolic.
\end{proof}

\bibliographystyle{amsalpha}

\begin{thebibliography}{XY}
\bibitem{A}\textsc{An, T. T. H.},
\textit{A defect relation for non-Archimedean analytic curves in
arbitrary projective varieties,}
Proc. Amer. Math. Soc. 135 (2007), 1255-1261.
\bibitem{Ba}\textsc{Babets, V. A.},
\textit{Picard-type theorems for holomorphic mappings,}
 Siberian Math. J. 25 (1984), 195--200.
\bibitem{DSW1}\textsc{Dethloff, G.; Schumacher, G.; Wong, P.-M.},
\textit{ Hyperbolicity of the complements of plane algebraic curves}.
Amer. J. Math.
117 (1995), no. 3, 573--599.
\bibitem{DSW2}\textsc{Dethloff, G.; Schumacher, G.; Wong, P.-M.},
\textit{On the hyperbolicity of the complements of curves in algebraic
surfaces:
the three-component case.} Duke Math. J. 78 (1995), no. 1, 193--212.
\bibitem{ES}\textsc{Eremenko, A. E.;  Sodin, M. L.}, \textit{The
value
distribution of meromorphic functions and meromorphic curves from the
point of view of potential theory}. St. Petersburg Math. J. 3 (1992),
109--136.
\bibitem{G}\textsc{Green, M. L.}, \textit{The hyperbolicity of the
complement of $2n+1$ hyperplanes in general position in $P\sb{n}$ and
related results}. Proc. Amer. Math. Soc. 66 (1977), no. 1, 109--113.
\bibitem{Ko}\textsc{Kobayashi, S.}
\textit{Hyperbolic manifolds and holomorphic mappings}.
Pure and Applied Mathematics, 2 Marcel Dekker, Inc., New York 1970.
\bibitem{NW}\textsc{Noguchi, J.; Winkelmann, J.}, \textit{Holomorphic
curves and integral points off divisors}. Math. Z. 239 (2002), no. 3,
593--610.
\bibitem{Ru2}\textsc{Ru, M.}, \textit{Integral points and
hyperbolicity
of the
complement of hypersurfaces}. J. Reine Angew. Math. 442 (1993),
163--176.
\bibitem{Ru}\textsc{Ru, M.}, \textit{A note on $p$-adic Nevanlinna
theory}. Proc. Amer. Math. Soc. 129 (2001), 1263--1269.
\bibitem{Sa}\textsc{Shafarevich, I. R.},
\textit{Basic Algebraic Geometry 1}. Varieties in projective space.
Second edition. Translated from the 1988 Russian edition and with notes
by Miles Reid. Springer-Verlag, Berlin, 1994.
\bibitem{Z}\textsc{Zaidenberg, M.}, \textit{Stability of hyperbolic
embeddedness and
construction of examples}. Math. USSR-Sb. 63 (1989),351--361.
\end{thebibliography}

\end{document}